\newcommand\cyr
\renewcommand\rmdefault{wncyr}
\renewcommand\sfdefault{wncyss}
\renewcommand\encodingdefault{OT2}
\DeclareTextFontCommand{\textcyr}{\cyr}
\def\cprime{\char"7E }
\newtheorem{theorem}{Theorem}[section]
\newcommand{\ovprt}{\overline{\partial}}
\newcommand{\ovli}{\overline}
\numberwithin{equation}{section}
\title{ Pauli operators and the $\overline\partial$-Neumann problem}
\author{ Friedrich Haslinger}
\thanks{Partially supported by the FWF-grant  P28154.}
 \address{ F. Haslinger: Fakult\"at  f\"ur Mathematik, Universit\"at Wien,
Oskar-Morgenstern-Platz 1, A-1090 Wien, Austria}
\email{ friedrich.haslinger@univie.ac.at}
\keywords{$\ovprt $-Neumann problem, Pauli operators, Schr\"odinger operators, compactness}
\subjclass[2010] {Primary 32W05; Secondary  32W10, 30H20, 35J10, 35P10}
\begin{document}

\maketitle
\renewcommand\abstractname{\hspace*{-0.5cm}{\cyr \bf ANNOTATSIYA}}
\begin{abstract} 
{\cyr { Metodami kompleksnogo analiza (v chastnosti, posredstvom primeneniya $\ovprt$-operatora Ne{\u i}mana) rassmatrivaet \hspace{-0.15cm}s{ya}
 spektral\cprime ny{\u i} analiz operatorov Pauli. }}

\end{abstract}
\renewcommand\abstractname{\bf Abstract}
\begin{abstract} We apply methods from complex analysis, in particular the
$\ovprt$-Neumann operator, to investigate spectral properties of Pauli operators.

\end{abstract}

\vskip 1.5 cm

\section{Introduction}
\vskip 1cm

Let $\varphi : \mathbb R^{2n} \longrightarrow \mathbb R$ be a $\mathcal C^2$-function. We consider the Schr\"odinger operators with magnetic field of the form
$$P_\pm = -\Delta_A \pm V,$$
also called Pauli operators, where 
$$A=\frac{1}{2} \left (-\frac{\partial \varphi}{\partial y_1} , \frac{\partial \varphi}{\partial x_1}, \dots, 
-\frac{\partial \varphi}{\partial y_n}, \frac{\partial \varphi}{\partial x_n} \right )$$
 is the magnetic potential 
 and
$$\Delta_A = \sum_{j=1}^n \left [ \left ( -\frac{\partial }{\partial x_j} - \frac{i}{2} \frac{\partial \varphi}{\partial y_j} \right )^2 +   \left ( -\frac{\partial }{\partial y_j} + \frac{i}{2} \frac{\partial \varphi}{\partial x_j} \right )^2 \right ],$$
and $V= \frac{1}{2} \Delta \varphi;$ we wrote elements of $\mathbb R^{2n}$ in the form $(x_1,y_1, \dots, x_n,y_n);$ we will identify $\mathbb R^{2n}$ with $\mathbb C^n,$ writing $(z_1, \dots ,z_n)= (x_1,y_1, \dots, x_n,y_n),$ this is mainly because we will use methods of complex analysis to analyze spectral properties of the above Schr\"odinger operators with magnetic field.  

For $n=1,$ there is an interesting connection to Dirac and Pauli operators: recall  the definition of $A$ in this case and define the Dirac operator $\mathcal{D}$  by 

\begin{equation}\label{eq: schr18}
\mathcal{D}= (-i \frac{\partial}{\partial x}-A_1)\, \sigma_1 + (-i \frac{\partial}{\partial y}-A_2)\, \sigma_2 = \mathcal{A}_1 \sigma_1 + \mathcal{A}_2 \sigma_2,
\end{equation}
where
$$
\sigma_1=\left(
\begin{array}{cc}
 0&1\\1&0
\end{array}\right)\;,\;\sigma_2=\left(
\begin{array}{cc}
 0&-i\\i&0
\end{array}\right)\;.
$$
Hence we can write
$$
\mathcal{D} =
\left(
\begin{array}{cc}
 0&\mathcal{A}_1-i\mathcal{A}_2\\
 \mathcal{A}_1+i \mathcal{A}_2&0
\end{array}\right) .
$$
We remark that $i(\mathcal{A}_2\mathcal{A}_1 - \mathcal{A}_1\mathcal{A}_2)=B$ and hence
it turns out that the square of $\mathcal D$ is diagonal with the Pauli operators $P_{\pm }$ on the diagonal:
\begin{eqnarray*}\label{eq: schr19}
\mathcal D^2 &=& 
\left(
\begin{array}{cc}
 \mathcal{A}_1^2 - i(\mathcal{A}_2\mathcal{A}_1 - \mathcal{A}_1\mathcal{A}_2) + \mathcal{A}_2^2&0\\0& \mathcal{A}_1^2 + i(\mathcal{A}_2\mathcal{A}_1 - \mathcal{A}_1\mathcal{A}_2) + \mathcal{A}_2^2
\end{array}\right)\\
&=&\left(
\begin{array}{cc}
 P_-&0\\0&P_+
\end{array}\right),
\end{eqnarray*}
where 
\begin{equation*}\label{eq: plusminus}
P_{\pm}= 
 \left ( -i \, \frac{\partial}{\partial x}- A_1 \right )^2
 +  \left ( -i \, \frac{\partial}{\partial y}- A_2 \right )^2 \pm
B\, = -\Delta_A \pm B,
\end{equation*}
see \cite{CFKS} and \cite{Th}.

\vskip 0.5 cm

Our aim is to investigate spectral properties of the Pauli operators $P_\pm.$
For this purpose we will use methods from complex analysis, the weighted $\overline \partial$-complex. We suppose that $\varphi : \mathbb C^n \longrightarrow \mathbb R$ is a plurisubharmonic $\mathcal C^2$-function.

Let 
$$L^2( \mathbb{C}^n, e^{-\varphi}) = \{ g: \mathbb{C}^n \longrightarrow \mathbb{C} \ {\text{measurable}} \,:
\|g\|^2_\varphi =(g,g)_\varphi = \int_{\mathbb{C}^n} |g|^2 e^{-\varphi}\, d\lambda < \infty \}.$$
Let $1\le q \le n$ and
$$f= \sum_{|J|=q}\, ' \, f_J\,d\overline z_J ,$$
where the sum is taken only over increasing multiindices $J=(j_1, \dots , j_q)$ and $d\overline z_J = d\overline z_{j_1} \wedge \dots \wedge d\overline z_{j_q}$ and $f_J\in L^2(\mathbb C^n, e^{-\varphi}).$

We write $f\in L^2_{(0,q)}(\mathbb C^n, e^{-\varphi})$ and define 
$$\ovprt f = \sum_{|J|=q}\, ' \, \sum_{j=1}^n \frac{\partial f_J}{\partial \overline z_j}\, d\overline z_j \wedge d\overline z_J$$
for $1\le q \le n-1$ and 
$${\text{dom}}(\ovprt) = \{ f \in L^2_{(0,q)}(\mathbb C^n, e^{-\varphi}) \, : \, \ovprt f \in L^2_{(0,q+1)}(\mathbb C^n, e^{-\varphi}) \} ,$$
where the derivatives are taken in the sense of distributions.

In this way $\ovprt$ becomes a densely defined closed operator and its adjoint $\ovprt^*_\varphi$ depends on the weight $\varphi.$

We consider the weighted $\ovprt$-complex
\begin{equation*}
L^2_{(0,q-1)}(\mathbb{C}^n , e^{-\varphi} )\underset{\underset{\ovprt_\varphi^* }
\longleftarrow}{\overset{\ovprt }
{\longrightarrow}} L^2_{(0,q)}(\mathbb{C}^n , e^{-\varphi} )
\underset{\underset{\ovprt_\varphi^* }
\longleftarrow}{\overset{\ovprt }
{\longrightarrow}} L^2_{(0,q+1)}(\mathbb{C}^n , e^{-\varphi} )
\end{equation*}
and we set 
$$\Box_{\varphi}^{(0,q)}= \ovprt\, \ovprt_\varphi^* +	 \ovprt_\varphi^* \ovprt,$$
where
$${\text{dom}}(\Box_\varphi^{(0,q)}) = \{ u \in {\text{dom}}(\ovprt) \cap {\text{dom}}(\ovprt^*_\varphi): \ovprt u \in  {\text{dom}}(\ovprt^*_\varphi), \ovprt^*_\varphi u \in  {\text{dom}}(\ovprt)\}.$$

It turns out that $\Box_{\varphi}^{(0,q)}$ is a densely defined, non-negative self-adjoint operator, which has a uniquely determined self-adjoint square root $(\Box_{\varphi}^{(0,q)})^{1/2}.$ The domain of
$(\Box_{\varphi}^{(0,q)})^{1/2})$ coincides with ${\text{dom}}(\ovprt) \cap {\text{dom}}(\ovprt^*_\varphi),$ which is also the domain of the corresponding quadratic form
$$Q_\varphi (u,v):=(\ovprt u, \ovprt v)_\varphi + (\ovprt^*_\varphi u, \ovprt^*_\varphi v)_\varphi,$$
and ${\text{dom}}(\Box_\varphi^{(0,q)})$ is a core of $(\Box_{\varphi}^{(0,q)})^{1/2},$ see for instance \cite{Dav}. 

Next we consider the Levi matrix 
$$M_\varphi = \left ( \frac{\partial^2 \varphi}{\partial z_j \partial \overline z_k} \right )_{j,k=1}^n$$

and suppose that the lowest eigenvalue $\mu_\varphi$ of $M_\varphi$ satisfies
\begin{equation} \label{perss}
\liminf_{|z| \to \infty} \mu_\varphi (z)>0.
\end{equation}

\eqref{perss} implies that $\Box_\varphi ^{(0,1)}$ is injective and that the bottom of the essential spectrum
$\sigma_e(\Box_\varphi ^{(0,1)})$ is positive (Persson's Theorem), see \cite{HaHe}.
Now it follows that $\Box_\varphi ^{(0,1)}$ has a bounded inverse,  which we denote by 
$$N_\varphi ^{(0,1)}: L^2_{(0,1)}(\mathbb{C}^n , e^{-\varphi} ) \longrightarrow L^2_{(0,1)}(\mathbb{C}^n , e^{-\varphi} ).$$
Using the square root of  $N_\varphi ^{(0,1)}$ we get the basic estimates
\begin{equation}\label{coerc}
\|u\|^2_\varphi \le C ( \|\ovprt u \|^2_\varphi + \|\ovprt^*_\varphi  u\|^2_\varphi ),
\end{equation}
for all $u\in {\text{dom}}(\ovprt) \cap {\text{dom}}(\ovprt^*_\varphi),$ see \cite{Has10} for more details.

In the following it will be important to know conditions on $\varphi$ implying that
the Bergman space of entire functions
$$A^2(\mathbb C^n, e^{-\varphi}) := L^2(\mathbb C^n, e^{-\varphi}) \cap \mathcal O (\mathbb C^n)$$
is infinite dimensional. This space coincides with ${\text{ker}} \ovprt,$ where
$$\ovprt : L^2(\mathbb C^n, e^{-\varphi}) \longrightarrow L^2_{(0,1)}(\mathbb C^n, e^{-\varphi}).$$ 
If $n=1,$ we can use the following concept: let $D(z,r)=\{w : |z-w|<r \};$
a non-negative Borel measure $\mu$ on $\mathbb{C}$ is doubling\index{doubling measure}, if there exists a constant $C>0$ such that for any $z\in \mathbb{C}$ and any $r>0$
\begin{equation}\label{doub}
\mu(D(z,r)) \le C \mu(D(z,r/2)).
\end{equation}

It can be shown that
\begin{equation}\label{doub2}
\mu(D(z,2r)) \ge (1+ C^{-3}) \mu(D(z,r)),
\end{equation}
for each $z\in \mathbb{C}$ and for each $r>0$; in particular $\mu(\mathbb{C})=\infty,$ unless $\mu (\mathbb{C})=0$ (see \cite{St}).

Example:
if $p(z, \overline z)$ is a polynomial on $\mathbb{C}$ of degree $d,$ then
$$d\mu (z)= |p(z, \overline z)|^{a}\, d\lambda (z), \ a> -\frac{1}{d}$$
is a doubling measure on $\mathbb{C},$ see \cite{St}.

\begin{theorem}\label{s: infdim}\cite{Ch}, \cite{RS}
Let $\varphi: \mathbb{C} \longrightarrow \mathbb{R}_{+}$ be a subharmonic $\mathcal{C}^{2}$-function.
Suppose that $d\mu=\triangle \varphi\, d\lambda$ is a non-trivial doubling measure.

Then the weighted space of entire functions
$$A^2( \mathbb{C}, e^{-\varphi}) =\{ f \, {\text{entire}} : \| f\|^2_{\varphi}= \int_{\mathbb{C}} |f|^2 e^{-\varphi}\,d\lambda < \infty \}$$
 is of infinite dimension.
 \end{theorem}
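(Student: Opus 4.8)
The plan is to construct explicitly infinitely many linearly independent entire functions lying in $A^2(\mathbb{C}, e^{-\varphi})$, using the doubling property of $d\mu = \triangle\varphi\, d\lambda$ to control the growth of $\varphi$ on dyadic annuli. First I would recall the standard fact (see \cite{St}) that a non-trivial doubling measure $\mu$ on $\mathbb{C}$ satisfies a reverse-doubling estimate such as \eqref{doub2}, which forces $\mu(D(0,r)) \to \infty$ as $r \to \infty$ at a definite polynomial-type rate; dually, the mass on an annulus $D(0,2r)\setminus D(0,r)$ is comparable to $\mu(D(0,2r))$. Since $\varphi$ is subharmonic with $\triangle\varphi\,d\lambda = d\mu$, one has the Riesz-type representation $\varphi(z) = \int \log|z-w|\,d\mu(w) + h(z)$ (harmonic correction) locally, and the doubling hypothesis gives two-sided control: $\varphi$ grows, but not too fast, so that on $D(0,r)$ one has bounds of the form $\varphi(z) \le A\,\mu(D(0,2r))\log r + B$ for $|z|\le r$, while simultaneously $\mu(D(0,r))$ itself grows like a power of $r$.

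Next I would exhibit the candidate functions: the monomials $1, z, z^2, \dots, z^m, \dots$. To prove $A^2(\mathbb{C},e^{-\varphi})$ is infinite-dimensional it suffices to show that for every $m$ the monomial $z^m$ (or some entire function of degree exactly $m$) has finite weighted norm, i.e. $\int_{\mathbb{C}} |z|^{2m} e^{-\varphi(z)}\,d\lambda(z) < \infty$. This reduces, after splitting the integral over dyadic annuli $\{2^k \le |z| < 2^{k+1}\}$, to showing that the sum $\sum_k 2^{2mk}\, e^{-c_k}\, 4^k$ converges, where $c_k := \inf_{|z|=2^k}\varphi(z)$ (roughly). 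The doubling property is exactly what guarantees that $c_k$ eventually dominates any fixed polynomial in $k$: indeed $\varphi$, being subharmonic with a doubling Laplacian of infinite total mass, must grow faster than $|z|^\varepsilon$ for some $\varepsilon>0$ — more precisely, $\mu(D(0,r))\gtrsim r^\delta$ by reverse doubling, and integrating the Laplacian twice (via the submean value property and Jensen's formula relating $\varphi$ to the counting function of $\mu$) yields $c_k \gtrsim 2^{\delta k}$, which beats $2mk\log 2$ for every fixed $m$. Hence each integral is finite and the monomials all lie in the space.

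The main obstacle, and the step deserving the most care, is the quantitative lower bound on $\varphi$ on circles $|z| = r$ in terms of the doubling measure. A doubling measure is controlled on disks, not on their boundaries, so one must pass from $\mu(D(0,r))$ to a genuine pointwise lower bound for $\varphi$; the natural tool is the formula $\frac{1}{2\pi}\int_0^{2\pi}\varphi(re^{i\theta})\,d\theta = \int_{|w|<r}\log(r/|w|)\,d\mu(w) + \varphi(0) + (\text{const})$ coming from Jensen's formula applied to the subharmonic $\varphi$, together with a Harnack-type argument to upgrade an average lower bound to a pointwise one using the doubling (hence upper) control of $\mu$ — since doubling gives both that $\mu$ is not too small \emph{and} not too large on dyadic scales, one controls the oscillation of $\varphi$ on each annulus. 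An alternative, and perhaps cleaner, route is simply to cite the reproducing-kernel estimates of Christ \cite{Ch} and Marco--Massaneda--Ortega-Cerd\`a (the sampling/interpolation circle of ideas in \cite{RS}), which directly provide lower bounds $K_\varphi(z,z) \gtrsim (\text{something}\to\infty)$ for the Bergman kernel of $A^2(\mathbb{C},e^{-\varphi})$ under the doubling hypothesis; infinite-dimensionality is then immediate since a finite-dimensional reproducing kernel space has a kernel with at most polynomial-in-dimension pointwise values and, more to the point, $\dim A^2 = \int_{\mathbb{C}} K_\varphi(z,z)e^{-\varphi(z)}\,d\lambda(z)$ would be finite and computable, contradicting the growth of $K_\varphi$. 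I would present the monomial argument as the main line of proof and remark that it also follows from the cited kernel estimates.
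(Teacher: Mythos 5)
The paper itself offers no proof of this statement---it is quoted from \cite{Ch} and \cite{RS}---so I can only assess your argument on its own terms, and unfortunately its main line contains a fatal gap. The monomial strategy fails: the doubling hypothesis on $\Delta\varphi\,d\lambda$ does \emph{not} imply the pointwise lower bound $\inf_{|z|=r}\varphi(z)\gtrsim r^{\delta}$ that you need. Take $\varphi(z)=({\text{Re}}\,z)^2=x^2$. This is a non-negative subharmonic $\mathcal C^2$ function with $\Delta\varphi\equiv 2$, so $d\mu=2\,d\lambda$ is a perfectly good non-trivial doubling measure; yet $\varphi$ vanishes identically on the imaginary axis, so $e^{-\varphi}$ does not decay there and
$$\int_{\mathbb C}|z|^{2m}e^{-x^2}\,d\lambda=\int_{\mathbb R}\int_{\mathbb R}(x^2+y^2)^m e^{-x^2}\,dx\,dy=\infty$$
for every $m\ge 0$ --- not even the constant function lies in $A^2(\mathbb C,e^{-\varphi})$. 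Jensen's formula does give you that the \emph{circular mean} of $\varphi$ grows like $r^{\delta}$ (in this example like $r^2$), but there is no Harnack inequality for subharmonic functions that upgrades a mean to an infimum, and the example shows that no such upgrade can exist. So the step ``$c_k\gtrsim 2^{\delta k}$'' is simply false, and with it the convergence of your dyadic sums.

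Note that $A^2(\mathbb C,e^{-x^2})$ \emph{is} infinite dimensional --- the functions $z^m e^{a z^2}$ with $0<a<1/2$ belong to it --- which illustrates that the correct basis of the space need not contain any polynomial and must be adapted to the geometry of $\mu$. This is exactly what the cited proofs do: Christ \cite{Ch} and the sampling/interpolation literature construct, via the Riesz decomposition of $\varphi$ and H\"ormander-type $\ovprt$-estimates with respect to local regularizations of $\mu$, entire ``peak'' functions concentrated near each point of an infinite separated sequence, and these are automatically linearly independent. Your ``alternative route'' via Bergman kernel lower bounds and the identity $\dim A^2=\int_{\mathbb C}K_\varphi(z,z)e^{-\varphi}\,d\lambda$ is sound in outline, but since those kernel estimates come from the very papers the theorem is attributed to, it amounts to citing the result rather than proving it. To repair the write-up you should abandon the monomials entirely and either carry out the peak-function construction or honestly reduce the statement to the quoted kernel estimates.
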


More general, in $\mathbb C^n,$ H\"ormanders $L^2$-estimates for the solution of the inhomogeneous Cauchy-Riemann equations yield

\begin{theorem}\label{berinf}\cite{Shi}, \cite{Has10}
Suppose that the lowest eigenvalue $\mu_\varphi$ satisfies 
\begin{equation} \label{berinf1}
\lim_{|z| \to \infty} |z|^2 \mu_\varphi (z)= +\infty.
\end{equation}
Then the weighted space of entire functions
$$A^2( \mathbb{C}^n, e^{-\varphi}) =\{ f \, {\text{entire}} : \| f\|^2_{\varphi}= \int_{\mathbb{C}^n} |f|^2 e^{-\varphi}\,d\lambda < \infty \}$$
 is of infinite dimension.
 \end{theorem}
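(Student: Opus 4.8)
The plan is to produce, for every positive integer $N$, at least $N$ linearly independent entire functions in $A^2(\mathbb C^n, e^{-\varphi})$ by solving $\overline\partial$ with H\"ormander's weighted $L^2$-estimate against a cleverly modified weight. First I would fix a point, say the origin (after a translation one may as well assume $\mu_\varphi(0)>0$ up to adjusting constants), and choose $N$ distinct points $p_1,\dots,p_N\in\mathbb C^n$ together with smooth functions $\chi_k$ that are identically $1$ near $p_k$ and supported in small pairwise-disjoint balls. The naive candidates $\chi_k$ are not holomorphic, so one corrects them: set $f_k=\chi_k-u_k$, where $u_k$ solves $\overline\partial u_k=\overline\partial\chi_k$ with $u_k$ vanishing at $p_k$. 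If each $u_k$ lies in $L^2(\mathbb C^n,e^{-\varphi})$ and vanishes at the corresponding $p_k$, then the $f_k$ are entire, square-integrable against $e^{-\varphi}$, and linearly independent because $f_k(p_j)=\delta_{kj}$.

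The key step is the $L^2$-estimate. One runs H\"ormander's theorem on $\mathbb C^n$ with the auxiliary weight $\varphi+\psi$, where $\psi(z)=2n\,\eta(z)\log|z-p_k|$ for a cutoff $\eta$ supported near $p_k$; the logarithmic pole forces any $L^2(e^{-\varphi-\psi})$ solution to vanish at $p_k$, while away from $p_k$ it only changes the weight by a bounded amount. The solvability estimate requires a lower bound on the smallest eigenvalue of the complex Hessian of the total weight. Here hypothesis \eqref{berinf1} enters decisively: since $|z|^2\mu_\varphi(z)\to+\infty$, for $|z|$ large the Hessian of $\varphi$ dominates any fixed negative contribution, and in particular one can absorb a comparison weight of the form $(n+\varepsilon)\log(1+|z|^2)$, whose complex Hessian decays like $|z|^{-2}$. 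Concretely, one checks that $i\partial\overline\partial\bigl(\varphi-(n+1)\log(1+|z|^2)\bigr)\ge c\,\mu_\varphi(z)\,\omega$ outside a large ball and is bounded below on the remaining compact set, so H\"ormander's estimate produces a solution $u_k$ with $\int |u_k|^2 e^{-\varphi}(1+|z|^2)^{-(n+1)}\,d\lambda<\infty$, which in turn bounds $\|u_k\|_\varphi$ after noting $\overline\partial\chi_k$ has compact support.

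From the estimate one extracts the two needed properties of $u_k$: square-integrability against $e^{-\varphi}$ (hence $f_k\in A^2(\mathbb C^n,e^{-\varphi})$, using that a holomorphic $L^2$ function against a locally bounded weight is genuinely in the Bergman space) and vanishing at $p_k$ (from the singular factor $|z-p_k|^{-2n}$ in the integrability statement). Since $N$ was arbitrary, $\dim A^2(\mathbb C^n,e^{-\varphi})\ge N$ for all $N$, so the space is infinite-dimensional. The main obstacle is purely technical bookkeeping in the Hessian comparison: one must verify that the decaying correction weights $(n+1)\log(1+|z|^2)$ and the local logarithmic poles $2n\eta\log|z-p_k|$ can simultaneously be dominated by $\varphi$'s Hessian on the unbounded region, which is exactly where \eqref{berinf1} — rather than the weaker \eqref{perss} — is indispensable; everything else is a routine application of H\"ormander's theorem and the reproducing-kernel-style separation-of-points argument.
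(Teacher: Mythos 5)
First, a remark on the comparison: the paper does not prove Theorem~\ref{berinf} at all --- it is quoted from \cite{Shi} and \cite{Has10} with only the attribution ``H\"ormander's $L^2$-estimates \dots yield''. Your proposal is therefore being measured against the standard argument behind that citation, and its skeleton (cut-off functions at $N$ points, corrected by solving $\overline\partial$ against a weight with logarithmic poles, linear independence by point evaluation) is indeed the right one. But the central analytic step, as you have written it, would fail. H\"ormander's estimate requires the total weight to be plurisubharmonic (more precisely, a \emph{nonnegative} lower bound for the complex Hessian wherever the datum does not vanish, and no negativity anywhere). Hypothesis \eqref{berinf1} controls $\mu_\varphi$ only near infinity; on a compact set $\varphi$ may be pluriharmonic, even identically zero on an arbitrarily large ball. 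Consequently $\varphi-(n+1)\log(1+|z|^2)$ has strictly negative complex Hessian on such a set, and ``bounded below on the remaining compact set'' is not a hypothesis under which any version of H\"ormander's theorem applies. The same defect afflicts the cut-off term $2n\,\eta\log|z-p_k|$, whose Hessian is negative of size $\sim r^{-2}$ on the transition annulus of $\eta$; nothing in \eqref{berinf1} absorbs that if $p_k$ sits in a region where $\mu_\varphi$ vanishes. The repair is to use the hypothesis where it actually has force: drop the global $\log(1+|z|^2)$ correction entirely, place the points $p_1,\dots,p_N$ far from the origin, and choose the cut-off radii $r_j$ comparable to $|p_j|$, so that on each transition annulus the negative contribution $C/r_j^2\lesssim C'/|z|^2$ is dominated by $\mu_\varphi(z)$ thanks to \eqref{berinf1}. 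Then $\Phi=\varphi+2n\sum_j\eta_j\log|z-p_j|$ is globally plurisubharmonic, $e^{-\Phi}$ is bounded below by a positive multiple of $e^{-\varphi}$, and the solution lies in $L^2(\mathbb C^n,e^{-\varphi})$ with no further comparison needed.

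Two smaller but real slips. First, you only force $u_k$ to vanish at $p_k$, yet you conclude $f_k(p_j)=\delta_{kj}$; for $j\ne k$ you have $f_k(p_j)=-u_k(p_j)$, which is uncontrolled. You must put logarithmic poles at \emph{all} $N$ points into the auxiliary weight (as in the repair above), not just at $p_k$. Second, the integrability you extract, $\int|u_k|^2e^{-\varphi}(1+|z|^2)^{-(n+1)}\,d\lambda<\infty$, carries the wrong sign in the exponent: a bound against the \emph{smaller} density $e^{-\varphi}(1+|z|^2)^{-(n+1)}$ does not imply $\|u_k\|_\varphi<\infty$. (Subtracting $(n+1)\log(1+|z|^2)$ from the weight would give the exponent $+(n+1)$, which does suffice --- but that is exactly the modification that destroys plurisubharmonicity on the compact set, as above.) With these three points corrected the argument goes through and is, in substance, the one the cited references use.
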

 
 Concerning compactness of the $\ovprt$-Neumann operator we have the following result:
 
 \begin{theorem}\label{compact3}\cite{Has10}
 Let $1\le q \le n.$ Suppose that the sum $s_q$ of the smallest $q$ eigenvalues of the Levi matrix $M_\varphi$ satisfies
 \begin{equation}\label{compact4}
 \lim_{|z|\to \infty} s_q(z)=+\infty.
 \end{equation}
 Then the $\ovprt$-Neumann operator 
 $$N_\varphi^{(0,q)} : L^2_{(0,q)}(\mathbb C^n, e^{-\varphi}) \longrightarrow
 L^2_{(0,q)}(\mathbb C^n, e^{-\varphi})$$
 is compact.
 \end{theorem}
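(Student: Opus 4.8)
The plan is to derive, from the weighted Kohn--Morrey--H\"ormander identity, the pointwise a~priori estimate $Q_\varphi(u,u)\ge\int_{\mathbb C^n}s_q\,|u|^2\,e^{-\varphi}\,d\lambda$, and then to feed it into Persson's theorem to conclude that $\Box_\varphi^{(0,q)}$ has empty essential spectrum; compactness of $N_\varphi^{(0,q)}$ will follow from there. First I would record the basic estimate. For a smooth compactly supported $(0,q)$-form $u=\sum_{|J|=q}'u_J\,d\overline z_J$, the integration-by-parts identity on $\mathbb C^n$, based on the commutator relation $[\,\partial/\partial z_j-\partial\varphi/\partial z_j,\ \partial/\partial\overline z_k\,]=\levim$, reads
\begin{equation}\label{eq:kmh}
\|\ovprt u\|_\varphi^2+\|\dquers u\|_\varphi^2=\sum_{|K|=q-1}'\sum_{j,k=1}^n\int_{\mathbb C^n}\levim\,u_{jK}\,\overline{u_{kK}}\,e^{-\varphi}\,d\lambda+\sum_{|J|=q}'\sum_{j=1}^n\Big\|\frac{\partial u_J}{\partial\overline z_j}\Big\|_\varphi^2 ,
\end{equation}
with the usual antisymmetric convention for the coefficients $u_{jK}$ (cf.\ \cite{Has10}). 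Then I would invoke the elementary linear-algebra fact that for a positive semidefinite Hermitian $n\times n$ matrix $H$ with eigenvalues $\mu_1\le\dots\le\mu_n$ one has $\sum_{|K|=q-1}'\sum_{j,k}H_{jk}\,u_{jK}\,\overline{u_{kK}}\ge(\mu_1+\dots+\mu_q)\sum_{|J|=q}'|u_J|^2$: diagonalising $H$ the left side becomes $\sum_{|J|=q}'\big(\sum_{j\in J}\mu_j\big)|u_J|^2$, and $\sum_{j\in J}\mu_j\ge\mu_1+\dots+\mu_q$ for every increasing $q$-index $J$. Applying this with $H=M_\varphi(z)$, discarding the nonnegative last sum in \eqref{eq:kmh}, and passing from $\mathcal C^\infty_c$-forms to the whole form domain (dense in the graph norm; use Fatou's lemma on the right), I would obtain
\begin{equation}\label{eq:aprioriq}
Q_\varphi(u,u)\ \ge\ \int_{\mathbb C^n}s_q(z)\,|u(z)|^2\,e^{-\varphi}\,d\lambda ,\qquad u\in\mathrm{dom}(\ovprt)\cap\mathrm{dom}(\dquers).
\end{equation}

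Next I would apply Persson's theorem, as used in \cite{HaHe}, to the nonnegative self-adjoint operator $\Box_\varphi^{(0,q)}$ with form $Q_\varphi$:
\[
\inf\sigma_e\big(\Box_\varphi^{(0,q)}\big)=\lim_{R\to\infty}\ \inf\Big\{\tfrac{Q_\varphi(u,u)}{\|u\|_\varphi^2}\ :\ u\in\mathrm{dom}(Q_\varphi)\setminus\{0\},\ \operatorname{supp}u\subset\{|z|>R\}\Big\}.
\]
For $u$ with support in $\{|z|>R\}$, \eqref{eq:aprioriq} gives $Q_\varphi(u,u)\ge(\inf_{|z|>R}s_q)\,\|u\|_\varphi^2$, so the right-hand side is bounded below by $\lim_{R\to\infty}\inf_{|z|>R}s_q(z)$, which is $+\infty$ by \eqref{compact4}. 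Hence $\sigma_e(\Box_\varphi^{(0,q)})=\emptyset$, equivalently $\Box_\varphi^{(0,q)}$ has compact resolvent, i.e.\ the inclusion $\mathrm{dom}(Q_\varphi)=\mathrm{dom}(\ovprt)\cap\mathrm{dom}(\dquers)\hookrightarrow L^2_{(0,q)}(\mathbb C^n,e^{-\varphi})$ --- with the graph norm on the left --- is a compact operator. Finally I would verify that $\Box_\varphi^{(0,q)}$ is injective, so that $N_\varphi^{(0,q)}=(\Box_\varphi^{(0,q)})^{-1}$ is everywhere defined and bounded: if $\Box_\varphi^{(0,q)}u=0$ then $Q_\varphi(u,u)=0$, hence by \eqref{eq:aprioriq} $u$ vanishes a.e.\ on $\{s_q>0\}$, which by \eqref{compact4} contains the complement of a large ball in $\mathbb C^n$, and since $\Box_\varphi^{(0,q)}$ is a second-order elliptic system whose principal part is the scalar Laplacian on each component, unique continuation forces $u\equiv0$. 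As $N_\varphi^{(0,q)}$ then maps $L^2_{(0,q)}(\mathbb C^n,e^{-\varphi})$ boundedly into $\mathrm{dom}(Q_\varphi)$, composing with the compact inclusion above shows that $N_\varphi^{(0,q)}$ is compact.

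The substantive input is the first step: the identity \eqref{eq:kmh} is routine, but having \eqref{eq:aprioriq} available on the \emph{whole} form domain rests on the density of smooth compactly supported forms in $\mathrm{dom}(\ovprt)\cap\mathrm{dom}(\dquers)$ in the graph norm on all of $\mathbb C^n$ (truncation by cut-offs plus mollification), which is the one genuinely technical point; the eigenvalue inequality for the Levi form, Persson's theorem, the unique-continuation step, and the concluding spectral argument are all standard. As an alternative to the Persson argument one could instead verify directly the compactness criterion ``for every $\varepsilon>0$ there is $C_\varepsilon$ with $\|u\|_\varphi^2\le\varepsilon\,Q_\varphi(u,u)+C_\varepsilon\,\|u\|_{-1}^2$'': split $\|u\|_\varphi^2$ into the integrals over $\{|z|\le R\}$ and $\{|z|>R\}$, bound the tail by $\varepsilon\,Q_\varphi(u,u)$ using \eqref{eq:aprioriq}, and control the local piece by interior G\aa rding estimates together with the Rellich--Ehrling lemma; this requires more work but avoids invoking Persson's theorem.
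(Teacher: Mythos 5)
The paper itself gives no proof of Theorem \ref{compact3}: it is quoted from \cite{Has10}. Your argument is correct and reconstructs essentially the argument of that reference: the weighted Kohn--Morrey--H\"ormander identity on $\mathbb C^n$ together with the eigenvalue inequality for the Levi form gives $Q_\varphi(u,u)\ge\int_{\mathbb C^n}s_q|u|^2e^{-\varphi}\,d\lambda$ on the form domain (the density of $\mathcal C^\infty_c$-forms being the standard technical lemma you correctly flag), after which compactness follows either via Persson's theorem --- the route the present paper itself takes for $\Box_\varphi^{(0,1)}$ following \cite{HaHe} --- or via the compactness estimate you sketch as an alternative.
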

 
 The next result asserts that compactness percolates up the $\ovprt$-complex.
 
 \begin{theorem}\label{percol}\cite{Has10}
Let $1\le q \le n-1.$ Suppose that $N_\varphi^{(0,q)}$ is compact. Then $N_\varphi^{(0,q+1)}$ is also compact.
 \end{theorem}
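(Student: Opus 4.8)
The plan is to recast compactness of the $\overline\partial$-Neumann operators as compactness of embeddings of form domains, and then to pass from bidegree $(0,q)$ to bidegree $(0,q+1)$ by contracting $(0,q+1)$-forms against the coordinate vector fields $\partial/\partial\overline z_j$. For $1\le m\le n$, condition \eqref{perss} forces $\Box_\varphi^{(0,m)}\ge c_m>0$ (exactly as for $m=1$, since the sum of the smallest $m$ eigenvalues of $M_\varphi$ is $\ge m\mu_\varphi$), so $N_\varphi^{(0,m)}=(\Box_\varphi^{(0,m)})^{-1}$ exists and is bounded; moreover the form domain of $\Box_\varphi^{(0,m)}$ equals $\mathrm{dom}(\overline\partial)\cap\mathrm{dom}(\overline\partial^*_\varphi)$ with graph norm $u\mapsto\bigl(\|u\|_\varphi^2+Q_\varphi(u,u)\bigr)^{1/2}$, and $(\Box_\varphi^{(0,m)})^{-1/2}$ is bounded with bounded inverse from $L^2_{(0,m)}(\mathbb C^n,e^{-\varphi})$ onto it. Hence (cf.\ \cite{Dav}) $N_\varphi^{(0,m)}$ is compact if and only if the inclusion
$$E_m:\bigl(\mathrm{dom}(\overline\partial)\cap\mathrm{dom}(\overline\partial^*_\varphi),\ \|\cdot\|_\varphi^2+Q_\varphi(\cdot,\cdot)\bigr)\hookrightarrow L^2_{(0,m)}(\mathbb C^n,e^{-\varphi})$$
is compact. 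It therefore suffices to show that compactness of $E_q$ implies compactness of $E_{q+1}$.

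For $j=1,\dots,n$ let $\iota_j$ denote the interior product with $\partial/\partial\overline z_j$; for $u=\sum_{|J|=q+1}'u_J\,d\overline z_J$ the contraction $\iota_j u$ is a $(0,q)$-form whose coefficients are, up to sign, the $u_J$ with $j\in J$. The basic tool is the Morrey--Kohn--H\"ormander identity: for compactly supported smooth $(0,q+1)$-forms,
$$Q_\varphi(u,u)=\sum_{|K|=q}'\sum_{j,k=1}^n\int_{\mathbb C^n}\frac{\partial^2\varphi}{\partial z_j\partial\overline z_k}\,u_{jK}\,\overline{u_{kK}}\,e^{-\varphi}\,d\lambda+\sum_{|J|=q+1}'\sum_{j=1}^n\Bigl\|\frac{\partial u_J}{\partial\overline z_j}\Bigr\|_\varphi^2$$
(no boundary term on $\mathbb C^n$; $u_{jK}$ is the $K$-coefficient of $\iota_j u$). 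Since $\varphi$ is plurisubharmonic the first, Levi, term is nonnegative, so — using the graph-norm density of compactly supported smooth forms in $\mathrm{dom}(\overline\partial)\cap\mathrm{dom}(\overline\partial^*_\varphi)$ — for every $u$ in the level-$(q+1)$ form domain one has $\partial u_J/\partial\overline z_j\in L^2$ and $\sum_{j}\sum_{|J|=q+1}'\|\partial u_J/\partial\overline z_j\|_\varphi^2\le Q_\varphi(u,u)$. Together with the distributional commutation relations $\overline\partial\,\iota_j+\iota_j\,\overline\partial=\partial/\partial\overline z_j$ acting coefficientwise and $\overline\partial^*_\varphi\,\iota_j=-\iota_j\,\overline\partial^*_\varphi$ — the latter from $\overline\partial^*_\varphi=\sum_{k=1}^n\delta_k\iota_k$ with $\delta_k=\partial\varphi/\partial z_k-\partial/\partial z_k$ acting coefficientwise, $\delta_k$ commuting and $\iota_k$ anticommuting with $\iota_j$ — this yields: (i) $\iota_j$ maps the level-$(q+1)$ form domain into the level-$q$ form domain, with $\overline\partial\iota_j u=(\partial/\partial\overline z_j)u-\iota_j\overline\partial u$ and $\overline\partial^*_\varphi\iota_j u=-\iota_j\overline\partial^*_\varphi u$; (ii) $\sum_{j=1}^n\|\iota_j v\|_\varphi^2=p\,\|v\|_\varphi^2$ for every $(0,p)$-form $v$, in particular $\sum_{j=1}^n\|\iota_j u\|_\varphi^2=(q+1)\|u\|_\varphi^2$; (iii) taking $\|\cdot\|_\varphi^2$ of the identities in (i), summing over $j$, and using (ii) for $\overline\partial u$ and $\overline\partial^*_\varphi u$ together with the bound above, $\sum_{j=1}^n Q_\varphi(\iota_j u,\iota_j u)\le C(n,q)\,Q_\varphi(u,u)$.

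Now let $(u^{(\nu)})_\nu$ be bounded in the graph norm of the level-$(q+1)$ form domain. By (i) and (iii) each of the $n$ sequences $(\iota_j u^{(\nu)})_\nu$ is bounded in the graph norm of the level-$q$ form domain; since $E_q$ is compact, a diagonal extraction yields a subsequence — still written $(u^{(\nu)})$ — along which $\iota_j u^{(\nu)}$ converges in $L^2_{(0,q)}(\mathbb C^n,e^{-\varphi})$ for every $j$. Then, by (ii), $(q+1)\|u^{(\nu)}-u^{(\nu')}\|_\varphi^2=\sum_{j=1}^n\|\iota_j u^{(\nu)}-\iota_j u^{(\nu')}\|_\varphi^2\to 0$ as $\nu,\nu'\to\infty$, so $(u^{(\nu)})$ converges in $L^2_{(0,q+1)}(\mathbb C^n,e^{-\varphi})$; hence $E_{q+1}$ is compact and $N_\varphi^{(0,q+1)}$ is compact. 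I expect step (iii) to be the crux: the commutators with $\iota_j$ must be handled carefully, and the Levi term in the Morrey--Kohn--H\"ormander identity must be discarded with the right sign — which is precisely where plurisubharmonicity of $\varphi$ is used; a lesser point is the graph-norm density of compactly supported smooth forms, needed to apply that identity to arbitrary elements of $\mathrm{dom}(\overline\partial)\cap\mathrm{dom}(\overline\partial^*_\varphi)$.
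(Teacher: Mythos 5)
Your proof is correct, and since the paper states Theorem \ref{percol} only with a citation to \cite{Has10}, the comparison is with the argument there: your route --- contracting $(0,q+1)$-forms with $\partial/\partial\overline z_j$, controlling $\sum_{j,J}\|\partial u_J/\partial\overline z_j\|_\varphi^2$ by $Q_\varphi(u,u)$ via the Morrey--Kohn--H\"ormander identity and plurisubharmonicity, and reassembling via $\sum_j\|\iota_j u\|_\varphi^2=(q+1)\|u\|_\varphi^2$ --- is essentially the standard percolation argument of the cited reference, merely phrased through sequential compactness of the form-domain embedding rather than through the equivalent compactness estimates.
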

\vskip 0.5 cm 
 We will also consider special weight functions, the so-called decoupled weights,
 and, using the tensor product structure of the essential spectrum $\sigma_e(\Box_\varphi^{(0,q)})$ we get the following (see \cite{BeHa})
\vskip 0.5 cm
 
 \begin{theorem}\label{decou}
 Let $\varphi_j \in \mathcal C^2(\mathbb C, \mathbb R)$ for $1\le j \le n$ with $n\ge 2,$ and set 
 $$\varphi (z_1,\dots , z_n) := \varphi_1(z_1) + \dots + \varphi_n(z_n).$$
 Assume that all $\varphi_j$ are subharmonic and such that $\Delta \varphi_j$ defines a nontrivial doubling measure. Then
 
 (i) dim$({\text{ker}}(\Box_\varphi^{(0,0)}) = {\text{dim}}(A^2(\mathbb C^n, e^{-\varphi}))= \infty,$ where $\Box_\varphi^{(0,0)}= \ovprt^*_\varphi \, \ovprt,$
 
 (ii) ${\text{ker}}(\Box_\varphi^{(0,q)})=\{ 0 \},$ for $q\ge 1,$
 
 (iii) $N_\varphi^{(0,q)}$ is bounded for $0\le q \le n,$
 
 (iv) $N_\varphi^{(0,q)}$ with $0\le q \le n-1$ is not compact, and 
 
 (v) $N_\varphi^{(0,n)}=\ovprt \, \ovprt^*_\varphi$ is compact if and only if
 
 $$\lim_{|z| \to \infty} \int_{B_1(z)} {\text{tr}}(M_\varphi)\, d\lambda = \infty,$$
 where $B_1(z) =\{ w \in \mathbb C^n : |w-z|< 1\}.$
 
 \end{theorem}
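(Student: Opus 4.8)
The plan is to exploit the tensor-product structure of the weighted $\ovprt$-complex of a decoupled weight. Write $H_j^{(0)}=L^2(\mathbb C,e^{-\varphi_j})$ and $H_j^{(1)}=L^2_{(0,1)}(\mathbb C,e^{-\varphi_j})$ and let $\ovprt_j,\ovprt^*_{\varphi_j}$ be the one-variable operators. Expanding a $(0,q)$-form $\sum_J{}'f_J\,d\bar z_J$ according to its multi-index and using $L^2(\mathbb C^n,e^{-\varphi})=\bigotimes_{j=1}^n L^2(\mathbb C,e^{-\varphi_j})$ yields the unitary identification
$$L^2_{(0,q)}(\mathbb C^n,e^{-\varphi})\ \cong\ \bigoplus_{|S|=q}\ \bigotimes_{j=1}^n H_j^{(\epsilon_j(S))},$$
where $S$ ranges over the $q$-element subsets of $\{1,\dots,n\}$ and $\epsilon_j(S)=1$ for $j\in S$, $\epsilon_j(S)=0$ for $j\notin S$. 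Under it $\ovprt=\sum_j\ovprt_j$ and $\ovprt^*_\varphi=\sum_j\ovprt^*_{\varphi_j}$ act in the separate variables, the cross terms ($j\ne k$) anticommute and cancel exactly as in Hodge theory on a Riemannian product, and hence
$$\Box_\varphi^{(0,q)}\Big|_{\bigotimes_j H_j^{(\epsilon_j(S))}}\ =\ \sum_{j=1}^n I\otimes\cdots\otimes\Box_{\varphi_j}^{(0,\epsilon_j(S))}\otimes\cdots\otimes I,$$
with $\Box_{\varphi_j}^{(0,0)}=\ovprt^*_{\varphi_j}\ovprt_j$ on $H_j^{(0)}$ and $\Box_{\varphi_j}^{(0,1)}=\ovprt_j\ovprt^*_{\varphi_j}$ on $H_j^{(1)}$ (compare \cite{BeHa}). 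Making this precise at the level of operator domains is the first task.

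Next I would assemble the one-variable facts. Because $d\mu_j=\Delta\varphi_j\,d\lambda$ is a non-trivial doubling measure, Theorem~\ref{s: infdim} gives $\dim\ker\Box_{\varphi_j}^{(0,0)}=\dim A^2(\mathbb C,e^{-\varphi_j})=\infty$, while Christ's weighted $L^2$-estimate \cite{Ch} provides a bounded solution operator for $\ovprt_j$; thus $\ovprt_j$ has closed range and $\Box_{\varphi_j}^{(0,0)}\ge c_j>0$ on $(\ker\ovprt_j)^\perp$. A Jensen-inequality argument shows $\ker\ovprt^*_{\varphi_j}=\{0\}$ on $(0,1)$-forms (a nonzero element would force an entire $k\not\equiv0$ with $\int_{\mathbb C}|k|^2e^{\varphi_j}\,d\lambda<\infty$, impossible since doubling makes the circular means of $\varphi_j$ tend to $+\infty$); hence $\ovprt_j$ is onto, $\Box_{\varphi_j}^{(0,1)}$ is unitarily equivalent to $\Box_{\varphi_j}^{(0,0)}\big|_{(\ker\ovprt_j)^\perp}$, so $\Box_{\varphi_j}^{(0,1)}\ge c_j>0$, $\ker\Box_{\varphi_j}^{(0,1)}=\{0\}$, and $\sigma_e(\Box_{\varphi_j}^{(0,1)})=\sigma_e(\Box_{\varphi_j}^{(0,0)})\setminus\{0\}$. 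Finally, the one-variable compactness criterion (see \cite{HaHe}, \cite{Has10}) says $\sigma_e(\Box_{\varphi_j}^{(0,1)})=\emptyset$, i.e.\ $N_{\varphi_j}^{(0,1)}$ is compact, if and only if $\lim_{|\zeta|\to\infty}\int_{D(\zeta,1)}\Delta\varphi_j\,d\lambda=+\infty$.

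With these ingredients, (i)--(v) follow from Ichinose's lemma for sums of commuting self-adjoint operators on tensor products: $\sigma(\sum_jA_j)=\overline{\sigma(A_1)+\cdots+\sigma(A_n)}$ and $\sigma_e(\sum_jA_j)=\overline{\bigcup_k(\sigma(A_1)+\cdots+\sigma_e(A_k)+\cdots+\sigma(A_n))}$. For (i), $\ker\Box_\varphi^{(0,0)}=\ker\ovprt=A^2(\mathbb C^n,e^{-\varphi})$ contains all products $h_1(z_1)\cdots h_n(z_n)$ with $h_j\in A^2(\mathbb C,e^{-\varphi_j})$, hence is infinite dimensional by Theorem~\ref{s: infdim}. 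For (ii) with $q\ge1$, each summand carries a factor $\Box_{\varphi_j}^{(0,1)}$ with trivial kernel, so the non-negative sum, hence $\Box_\varphi^{(0,q)}$, is injective. For (iii) the spectrum of every summand lies in $\{0\}\cup[\min_jc_j,\infty)$ — at most one $\Box_{\varphi_j}^{(0,0)}$-factor can supply the value $0$, all $\Box_{\varphi_j}^{(0,1)}$-factors are bounded below — so $0$ is isolated in $\sigma(\Box_\varphi^{(0,q)})$ and $N_\varphi^{(0,q)}$ is bounded. For (iv) with $0\le q\le n-1$, pick a $q$-set $S$ and an index $k\notin S$; combining $0\in\sigma_e(\Box_{\varphi_k}^{(0,0)})$ with positive spectral values of the remaining factors (for $q=0$ one uses that $\Box_{\varphi_m}^{(0,0)}\ne0$ for some $m\ne k$, which is where $n\ge2$ enters) gives a strictly positive point of $\sigma_e(\Box_\varphi^{(0,q)})$, so $N_\varphi^{(0,q)}$ is not compact. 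For (v), $q=n$ forces $S=\{1,\dots,n\}$, and the essential-spectrum formula shows $\sigma_e(\Box_\varphi^{(0,n)})=\emptyset$ — i.e.\ $N_\varphi^{(0,n)}=\ovprt\,\ovprt^*_\varphi$ is compact — exactly when $\sigma_e(\Box_{\varphi_k}^{(0,1)})=\emptyset$ for every $k$, i.e.\ $\lim_{|\zeta|\to\infty}\int_{D(\zeta,1)}\Delta\varphi_k\,d\lambda=+\infty$ for all $k$. A Fubini computation rewrites $\int_{B_1(z)}{\text{tr}}(M_\varphi)\,d\lambda$ as $\tfrac{c_{n-1}}{4}\sum_j\int_{D(z_j,1)}\Delta\varphi_j(w_j)\,(1-|w_j-z_j|^2)^{n-1}\,d\lambda(w_j)$, and doubling makes the boundary weight $(1-|\cdot|^2)^{n-1}$ irrelevant (each term is comparable to $\mu_j(D(z_j,1))$), so this quantity tends to $+\infty$ as $|z|\to\infty$ in $\mathbb C^n$ precisely when each $\int_{D(\zeta,1)}\Delta\varphi_j\,d\lambda\to+\infty$ as $|\zeta|\to\infty$ in $\mathbb C$. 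This yields (v).

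The main obstacle is the first step — upgrading the formal tensor decomposition of the weighted $\ovprt$-complex to an honest identity of self-adjoint operators, i.e.\ pinning down the cores and domains and verifying the vanishing of the mixed terms there — together with correctly invoking the sharp one-variable compactness characterization needed in (v). Once these are available, the rest is spectral bookkeeping via Ichinose's lemma.
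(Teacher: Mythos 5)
The paper itself gives no proof of Theorem \ref{decou} --- it imports the result from \cite{BeHa}, describing the method only as ``using the tensor product structure of the essential spectrum $\sigma_e(\Box_\varphi^{(0,q)})$'' --- and your proposal is precisely that method: decompose the decoupled weighted $\ovprt$-complex into a direct sum of tensor products of one-variable complexes, feed in the one-variable facts (infinite-dimensional $A^2(\mathbb C, e^{-\varphi_j})$ from Theorem \ref{s: infdim}, the spectral gap from Christ's solution operator, triviality of the harmonic $(0,1)$-space, and the one-variable compactness criterion), and run Ichinose's lemma for $\sigma$ and $\sigma_e$ of tensor sums. The outline is correct; the only slip is the parenthetical in your treatment of (iii) --- several $\Box_{\varphi_j}^{(0,0)}$-factors can contribute $0$ simultaneously (that is exactly the kernel); what makes $N_\varphi^{(0,q)}$ bounded is that a sum of points, each lying in $\{0\}\cup[c_j,\infty)$, lies in $\{0\}\cup[\min_j c_j,\infty)$, so $0$ remains isolated in $\sigma(\Box_\varphi^{(0,q)})$.
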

\vskip 1 cm 
\section{Pauli operators}
\vskip 1cm

Now we apply the results on the weighted $\ovprt$-Neumann operator to derive spectral properties of the Pauli operators and discuss some special examples.
 
\begin{theorem}\label{pauli3}
Let $\varphi : \mathbb C^n \longrightarrow \mathbb R$ be a plurisubharmonic $\mathcal C^2$-function. Suppose that the smallest eigenvalue $\mu_\varphi$ of the Levi matrix $M_\varphi$ satisfies
\begin{equation}\label{comppauli}
\lim_{|z|\to \infty}\mu_\varphi (z) = \infty.
\end{equation}
Let 
$$A=\frac{1}{2} \left (-\frac{\partial \varphi}{\partial y_1} , \frac{\partial \varphi}{\partial x_1}, \dots, 
-\frac{\partial \varphi}{\partial y_n}, \frac{\partial \varphi}{\partial x_n} \right )$$
and $V=\frac{1}{2} \Delta \varphi.$
Then the Pauli operator $P_-= -\Delta_A -V$ fails to have a compact resolvent, whereas the Pauli operator  $P_+ = -\Delta_A + V$ has a compact inverse operator acting on $L^2(\mathbb R^{2n}).$

\end{theorem}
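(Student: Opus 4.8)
The plan is to transfer both Pauli operators onto weighted $\overline\partial$-Laplacians through the standard unitary $U\colon L^2(\mathbb C^n,e^{-\varphi})\to L^2(\mathbb R^{2n})$, $Uf=e^{-\varphi/2}f$ (extended coefficientwise to $(0,q)$-forms), and then to read off the two assertions from Theorems~\ref{berinf} and~\ref{compact3}. The first step is to record how $U$ acts on the complex: it conjugates $\overline\partial$ to the twisted operator with components $D_j=\partial/\partial\bar z_j+\tfrac12\,\partial\varphi/\partial\bar z_j$, and $\overline\partial^*_\varphi$ to the Hilbert space adjoint $D^*$, with components $D_j^*=-\partial/\partial z_j+\tfrac12\,\partial\varphi/\partial z_j$. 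Writing $\mathcal A_{1,j}=-i\,\partial/\partial x_j-A_{1,j}$, $\mathcal A_{2,j}=-i\,\partial/\partial y_j-A_{2,j}$, one checks $\mathcal A_{1,j}+i\mathcal A_{2,j}=-2i\,D_j$, $\mathcal A_{1,j}-i\mathcal A_{2,j}=2i\,D_j^*$ and $[\mathcal A_{1,j},\mathcal A_{2,j}]=2i\,(M_\varphi)_{jj}$; factoring $\mathcal A_{1,j}^2+\mathcal A_{2,j}^2$ in the two possible orders then gives
\[
\mathcal A_{1,j}^2+\mathcal A_{2,j}^2=4D_j^*D_j+2\,(M_\varphi)_{jj}=4D_jD_j^*-2\,(M_\varphi)_{jj}.
\]
Summing over $j$ and using $V=\tfrac12\Delta\varphi=2\,\text{tr}\,M_\varphi$ I get
\[
P_-=4\sum_{j=1}^{n}D_j^*D_j=4\,U\,\Box_\varphi^{(0,0)}\,U^{-1},\qquad P_+=4\sum_{j=1}^{n}D_jD_j^*=4\,U\,\Box_\varphi^{(0,n)}\,U^{-1},
\]
the second identity after identifying $(0,n)$-forms with their single coefficient, using that $\overline\partial$ annihilates $(0,n)$-forms, and the elementary formula $\overline\partial\,\overline\partial^*_\varphi(g\,d\bar z_1\wedge\cdots\wedge d\bar z_n)=(\Box_\varphi^{(0,0)}g+(\text{tr}\,M_\varphi)g)\,d\bar z_1\wedge\cdots\wedge d\bar z_n$. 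These identities are between the natural self-adjoint realizations of $P_\pm$ and the form-defined operators $\Box_\varphi^{(0,q)}$, and matching the realizations just means matching the corresponding quadratic forms, as in the computation above.

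Granting this, the statement about $P_-$ is immediate: \eqref{comppauli} trivially implies \eqref{berinf1}, so Theorem~\ref{berinf} makes the Bergman space $A^2(\mathbb C^n,e^{-\varphi})=\ker\overline\partial=\ker\Box_\varphi^{(0,0)}$ infinite-dimensional. Hence $0$ is an eigenvalue of $\Box_\varphi^{(0,0)}$ of infinite multiplicity, so $0\in\sigma_e(\Box_\varphi^{(0,0)})$ and, by the unitary equivalence, $0\in\sigma_e(P_-)$; an operator with non-empty essential spectrum has no compact resolvent.

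For $P_+$ I first check that $\Box_\varphi^{(0,n)}$ is injective. If $u=g\,d\bar z_1\wedge\cdots\wedge d\bar z_n$ lies in $\ker\Box_\varphi^{(0,n)}$, then, since $\overline\partial$ annihilates $(0,n)$-forms, $\overline\partial^*_\varphi u=0$, which forces $e^{-\varphi}g$ to be anti-holomorphic, say $\overline{e^{-\varphi}g}=h\in\mathcal O(\mathbb C^n)$; then $\psi:=|g|^2e^{-\varphi}=e^{\varphi}|h|^2$ is the exponential of a plurisubharmonic function, hence a non-negative plurisubharmonic function, while $\int_{\mathbb C^n}\psi\,d\lambda=\|u\|_\varphi^2<\infty$, so the sub-mean-value inequality forces $\psi\equiv0$ and $u=0$. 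Next, the sum $s_n$ of all $n$ eigenvalues of $M_\varphi$ equals $\text{tr}\,M_\varphi\ge\mu_\varphi$, so \eqref{comppauli} gives $\lim_{|z|\to\infty}s_n(z)=\infty$; Theorem~\ref{compact3} with $q=n$ then shows $N_\varphi^{(0,n)}$ is compact, and—exactly as for $\Box_\varphi^{(0,1)}$ in the discussion after \eqref{perss}, now applied on $(0,n)$-forms with $s_n$ in place of $\mu_\varphi$ (Persson's theorem)—the bottom of $\sigma_e(\Box_\varphi^{(0,n)})$ is positive, so together with injectivity $\Box_\varphi^{(0,n)}$ has the bounded, compact inverse $N_\varphi^{(0,n)}$. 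Transporting through $U$, $P_+=4\,U\,\Box_\varphi^{(0,n)}\,U^{-1}$ is boundedly invertible on $L^2(\mathbb R^{2n})$ with compact inverse $\tfrac14\,U\,N_\varphi^{(0,n)}\,U^{-1}$. (Alternatively, compactness of $N_\varphi^{(0,n)}$ follows from that of $N_\varphi^{(0,1)}$—available from Theorem~\ref{compact3} with $q=1$ since $s_1=\mu_\varphi\to\infty$—by iterating Theorem~\ref{percol}.)

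The step I expect to need the most care is the identification $P_+\cong 4\,\Box_\varphi^{(0,n)}$: for $n\ge2$ the operator $P_+$ carries the potential $+\tfrac12\Delta\varphi$ and acts on scalar functions, and the point is that this scalar operator is unitarily equivalent to the $\overline\partial$-Laplacian on $(0,n)$-forms (whose coefficient space is one-dimensional), not on $(0,1)$-forms. The cleanest way to see this—and to fix all signs—is to recognise $P_-$ and $P_+$ as the two extreme diagonal blocks (on $0$-forms and on $n$-forms) of the square of the Dirac operator on $\Lambda^\bullet\mathbb C^n$, the case $n=1$ of the introduction being the first instance. Once this identification and the injectivity of $\Box_\varphi^{(0,n)}$ are in hand, both conclusions follow directly from the results quoted above.
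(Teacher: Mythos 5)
Your proposal is correct and follows essentially the same route as the paper: conjugating by $e^{\pm\varphi/2}$ to identify $P_-$ with $4\,\Box_\varphi^{(0,0)}$ and $P_+$ with $4\,\Box_\varphi^{(0,n)}$, then deducing $0\in\sigma_e(P_-)$ from the infinite-dimensionality of $A^2(\mathbb C^n,e^{-\varphi})$ (Theorem~\ref{berinf}) and compactness of the inverse of $P_+$ from compactness of $N_\varphi^{(0,n)}$. The only (harmless) variations are that you derive the two identities algebraically via the commutators $[\mathcal A_{1,j},\mathcal A_{2,j}]=2i(M_\varphi)_{jj}$ rather than by direct computation, you apply Theorem~\ref{compact3} directly with $q=n$ (using $s_n=\operatorname{tr}M_\varphi\ge\mu_\varphi$) instead of $q=1$ plus the percolation Theorem~\ref{percol} (which you note as an alternative), and you make explicit the injectivity of $\Box_\varphi^{(0,n)}$, which the paper leaves implicit.
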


\begin{proof} For the proof we first consider the complex Laplacian $\Box_\varphi^{(0,0)}= \ovprt^*_\varphi \, \ovprt,$ which acts on $L^2(\mathbb C^n, e^{-\varphi})$ at the beginning of the weighted $\ovprt$-complex as a non-negative self-adjoint, densely defined operator, we take the maximal extension from $\mathcal C^\infty_0 (\mathbb C^n),$ as  $\Box_\varphi^{(0,0)}$ is essentially self-adjoint, there is only one self-adjoint extension. For $f\in \mathcal C^\infty_0 (\mathbb C^n)$ we get
$$\Box_\varphi^{(0,0)} f= \ovprt^*_\varphi \, \ovprt f = - \sum_{j=1}^n \left (
\frac{\partial}{\partial z_j} - \frac{\partial \varphi}{\partial z_j} \right ) \frac{\partial f}{\partial \overline z_j}.$$
Now we apply the isometry
$$U_{\varphi} : L^2(\mathbb C^n) \longrightarrow L^2(\mathbb C^n, , e^{-\varphi})$$
defined by $U_{\varphi} (g)= e^{\varphi /2} g,$ for $g\in L^2(\mathbb C^n),$
and afterwards the isometry
$$U_{-\varphi} : L^2(\mathbb C^n, e^{-\varphi}) \longrightarrow L^2(\mathbb C^n)$$
defined by $U_{-\varphi} (f)= e^{-\varphi /2} f,$ for $f\in L^2(\mathbb C^n, e^{-\varphi}).$ 
Hence we get
\begin{align*}
e^{-\varphi/2} \Box_\varphi^{(0,0)}(e^{\varphi/2}g) \\
&= \sum_{j=1}^n \left ( -\frac{\partial^2 g}{\partial z_j \partial \overline z_j} + \frac{1}{2} \frac{\partial \varphi}{\partial z_j}\frac{\partial g}{\partial \overline z_j}
 -\frac{1}{2} \frac{\partial \varphi}{\partial \overline z_j}\frac{\partial g}{\partial  z_j} + \frac{1}{4}  \frac{\partial \varphi}{\partial z_j}\frac{\partial \varphi}{\partial \overline z_j} -\frac{1}{2}\frac{\partial^2 \varphi}{\partial z_j \partial \overline z_j}g
 \right ),
\end{align*}
and separating into real and imaginary part 
$$\frac{\partial}{\partial z_j}= \frac{1}{2} \left ( \frac{\partial}{\partial x_j} - 
\frac{\partial}{\partial y_j} \right ) \ , \ 
\frac{\partial}{\partial \overline z_j}= \frac{1}{2} \left ( \frac{\partial}{\partial x_j} + 
\frac{\partial}{\partial y_j} \right )
$$
we obtain 
\begin{equation}\label{pauli10}
e^{-\varphi/2} \Box_\varphi^{(0,0)}(e^{\varphi/2}g) = \frac{1}{4} (-\Delta_A-V)g,
\end{equation}
where 
$$A=\frac{1}{2} \left (-\frac{\partial \varphi}{\partial y_1} , \frac{\partial \varphi}{\partial x_1}, \dots, 
-\frac{\partial \varphi}{\partial y_n}, \frac{\partial \varphi}{\partial x_n} \right )$$
and 
 $$V= 2 {\text{tr}}(M_\varphi)=\frac{1}{2} \Delta \varphi.$$
Since the kernel of $\ovprt : L^2(\mathbb C^n, e^{-\varphi}) \longrightarrow
L^2_{(0,1)}(\mathbb C^n, e^{-\varphi})$ coincides with the Bergman space
$A^2(\mathbb C^n, e^{-\varphi})$ we get from \eqref{pauli10} and the fact that 
\eqref{comppauli} implies that $A^2(\mathbb C^n, e^{-\varphi})$ is infinite dimensional (see Theorem \ref{berinf}) that $0 \in \sigma_e(\Box_\varphi^{(0,0)}).$ Hence $\Box_\varphi^{(0,0)}$ fails to be with compact resolvent.

In order to show that the Pauli operator $P_+$ has a compact inverse we look at the end of the weighted $\ovprt$-complex. 

Let $u=u\, d\ovli z_1\wedge \dots \wedge d\ovli z_n$ be a smooth $(0,n)$-form belonging to the domain of $\Box_\varphi^{(0,n)}.$ For $1 \leq j \leq n$ denote by $K_j$ the increasing multiindex $K_j := (1,\dots,j-1,j+1,\dots,n)$ of length $n-1$.\
Then
$$ \ovprt_\varphi^* u = \sum_{j=1}^n (-1)^{j+1} \bigg(\frac{\partial \varphi}{\partial z_j}u-\frac{\partial u}{\partial z_j}\bigg)\,d\ovli z_{K_j}.$$
Hence 
\begin{align*}
 \ovprt \ovprt_\varphi^* u
  &= \Bigg[\sum_{j=1}^n \frac{\partial }{\partial \ovli z_j} \bigg(\frac{\partial \varphi}{\partial z_j}u-\frac{\partial u}{\partial z_j}\bigg)\Bigg] d\ovli z_1 \wedge \dots \wedge d\ovli z_n\\
  &= \Bigg[\sum_{j=1}^n  \bigg(\frac{\partial^2\varphi}{\partial z_j\partial \ovli z_j}u+
      \frac{\partial \varphi}{\partial z_j}
      \frac{\partial u}{\partial \ovli z_j} -
      \frac{\partial^2 u}{\partial z_j\partial \ovli z_j}
      \bigg)\Bigg] d\ovli z_1 \wedge \dots \wedge d\ovli z_n.
\end{align*}
Conjugation with the unitary operator $U_{-\varphi}: L^2(\mathbb C^n,e^{-\varphi}) \to L^2(\mathbb C^n)$ of multiplication by $e^{-\varphi/2}$ gives
\begin{equation*}\label{eq:complex_laplacian_smooth_n1}
e^{-\varphi/2}  \Box_\varphi^{(0,n)} e^{\varphi/2}g= \sum_{j=1}^n \bigg(
-\frac{\partial^2 g}{\partial z_j\partial \ovli z_j}-\frac{1}{2} \frac{\partial \varphi}{\partial \ovli z_j}
\frac{\partial g}{\partial z_j} + \frac{1}{2} \frac{\partial \varphi}{\partial z_j}
\frac{\partial g}{\partial \ovli z_j} + \frac{1}{4} \frac{\partial \varphi}{\partial \ovli z_j}
\frac{\partial \varphi}{\partial z_j} g + \frac{1}{2}\frac{\partial^2 \varphi}{\partial z_j\partial \ovli z_j}g\bigg),
\end{equation*}
where $g\in L^2(\mathbb C^n)$ and we just wrote down the coefficient of the corresponding $(0,n)$-form.
This operator can be expressed by real variables in the form
\begin{equation}\label{eq:complex_laplacian_smooth_n2}
 e^{-\varphi/2}  \Box_\varphi^{(0,n)} e^{\varphi/2}g = \frac{1}{4} (-\Delta_A +V)g,
\end{equation}
with
$$ \Delta_A= \sum_{j=1}^n \Bigg[\bigg( -\frac{\partial}{\partial x_j} - \frac{i}{2} \frac{\partial \varphi}{\partial y_j} \bigg)^2
+  \bigg( -\frac{\partial}{\partial y_j} + \frac{i}{2} \frac{\partial \varphi}{\partial x_j} \bigg)^2 \Bigg], $$
and $V = 2{\text{tr}}M_\varphi)$.
It follows that $-\Delta_A +V$ is a Schr\"odinger operator on $L^2(\mathbb R^{2n})$ with magnetic vector potential
$$ A =\frac{1}{2} \bigg(-\frac{\partial \varphi}{\partial y_1}, \frac{\partial \varphi}{\partial x_1}, \dotsc, -\frac{\partial \varphi}{\partial y_n}, \frac{\partial \varphi}{\partial x_n}\bigg), $$
where $z_j=x_j+iy_j$, $j=1,\dots,n$, and non-negative electric potential $V$ in the case where $\varphi$ is plurisubharmonic.

From \eqref{comppauli} we get that $N_\varphi^{(0,1)}$ is compact (Theorem \ref{compact3}) and by Theorem \ref{percol} that $N_\varphi^{(0,n)}$ is compact.
Finally \eqref{eq:complex_laplacian_smooth_n2} implies that the Pauli operator $P_+$ has a compact inverse.

\end{proof}

For decoupled  weights $\varphi (z_1, \dots, z_n)= \varphi_1(z_1) + \dots +\varphi_n(z_n)$ even more can be said.

\begin{theorem}\label{decou1}
 Let $\varphi_j \in \mathcal C^2(\mathbb C, \mathbb R)$ for $1\le j \le n$ with $n\ge 1,$ and set 
 $$\varphi (z_1,\dots , z_n) := \varphi_1(z_1) + \dots + \varphi_n(z_n).$$
 Assume that all $\varphi_j$ are subharmonic and such that $\Delta \varphi_j$ defines a nontrivial doubling measure. 
 
 Let 
$$A=\frac{1}{2} \left (-\frac{\partial \varphi}{\partial y_1} , \frac{\partial \varphi}{\partial x_1}, \dots, 
-\frac{\partial \varphi}{\partial y_n}, \frac{\partial \varphi}{\partial x_n} \right )$$
and $V=\frac{1}{2} \Delta \varphi.$
Then the Pauli operator $P_-= -\Delta_A -V$ fails to have a compact resolvent, 
the Pauli operator $P_+= -\Delta_A+V$ has a compact inverse if and only if 
$$\lim_{|z| \to \infty} \int_{B_1(z)} {\text{tr}}(M_\varphi)\, d\lambda = \infty,$$
where $B_1(z) =\{ w \in \mathbb C^n : |w-z|< 1\}.$

\end{theorem}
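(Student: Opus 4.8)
The plan is to transport both Pauli operators, via the computations already carried out in the proof of Theorem \ref{pauli3}, to the two ends of the weighted $\ovprt$-complex attached to the decoupled weight $\varphi$, and then to apply Theorem \ref{decou} (together with, in the case $n=1$, the one-variable results on which it rests). First I would note that since each $\varphi_j$ is subharmonic, the Levi matrix $M_\varphi$ is diagonal with nonnegative entries $\partial^2\varphi_j/\partial z_j\partial\ovli z_j=\frac{1}{4}\Delta\varphi_j$; in particular $\varphi$ is plurisubharmonic, so Theorem \ref{pauli3} and its proof apply verbatim. Conjugating by the unitary $U_{-\varphi}$ of multiplication by $e^{-\varphi/2}$, the identities \eqref{pauli10} and \eqref{eq:complex_laplacian_smooth_n2} yield, on $\mathcal C^\infty_0(\mathbb C^n)$ and hence, as in the proof of Theorem \ref{pauli3}, on the self-adjoint operators themselves,
$$P_-=4\,U_{-\varphi}\,\Box_\varphi^{(0,0)}\,U_\varphi ,\qquad P_+=4\,U_{-\varphi}\,\Box_\varphi^{(0,n)}\,U_\varphi ,$$
where in the second identity the $(0,n)$-form is identified with its single coefficient. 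Thus $P_-$ is unitarily equivalent to $4\Box_\varphi^{(0,0)}$ and $P_+$ to $4\Box_\varphi^{(0,n)}$; since having a compact resolvent is equivalent to having empty essential spectrum, a property invariant under unitary equivalence and under the scaling by $4$, it suffices to prove the corresponding statements for $\Box_\varphi^{(0,0)}$ and $\Box_\varphi^{(0,n)}$.

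For $P_-$ I would argue as follows. The kernel of $\Box_\varphi^{(0,0)}=\ovprt^*_\varphi\,\ovprt$ coincides with the Bergman space $A^2(\mathbb C^n,e^{-\varphi})$, which is infinite-dimensional: for $n\ge 2$ this is Theorem \ref{decou}(i), and for $n=1$ it follows from Theorem \ref{s: infdim} applied to $\varphi=\varphi_1$, the measure $\Delta\varphi_1\,d\lambda$ being a nontrivial doubling measure. Consequently $0$ is an eigenvalue of $\Box_\varphi^{(0,0)}$ of infinite multiplicity, so $0\in\sigma_e(\Box_\varphi^{(0,0)})$; hence $\Box_\varphi^{(0,0)}$, and therefore $P_-$, fails to have a compact resolvent.

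For $P_+$ I would use that in top degree $\Box_\varphi^{(0,n)}=\ovprt\,\ovprt^*_\varphi$. By Theorem \ref{decou}(ii) this operator is injective, and by Theorem \ref{decou}(iii) the $\ovprt$-Neumann operator $N_\varphi^{(0,n)}$ is bounded; for $n=1$ both statements are contained in \cite{Ch} and \cite{BeHa}. Hence $\Box_\varphi^{(0,n)}$ has the bounded inverse $N_\varphi^{(0,n)}$, so $0\notin\sigma(P_+)$ and $P_+^{-1}=\frac{1}{4}\,U_{-\varphi}\,N_\varphi^{(0,n)}\,U_\varphi$. Therefore $P_+$ has a compact inverse if and only if $N_\varphi^{(0,n)}$ is compact, and by Theorem \ref{decou}(v) the latter holds precisely when
$$\lim_{|z|\to\infty}\int_{B_1(z)}{\text{tr}}(M_\varphi)\,d\lambda=\infty ,$$
which is the asserted criterion.

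The only step that is not a direct citation is the justification, at the level of self-adjoint operators, of the two operator identities in the first paragraph: one must know that $\Box_\varphi^{(0,0)}$ and $\Box_\varphi^{(0,n)}$ are the closures of their restrictions to smooth compactly supported forms, so that the pointwise identities \eqref{pauli10} and \eqref{eq:complex_laplacian_smooth_n2} upgrade to genuine unitary equivalences with the magnetic Schr\"odinger operators $P_\pm$. This is precisely the bookkeeping already performed in the proof of Theorem \ref{pauli3}, so it demands no additional work here; everything else reduces to an application of Theorem \ref{decou}, or, when $n=1$, of Theorem \ref{s: infdim} together with \cite{Ch} and \cite{BeHa}.
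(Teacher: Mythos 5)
Your proof is correct and follows essentially the same route as the paper: the paper's own argument is just a terse citation of Theorem \ref{s: infdim} for the infinite-dimensionality of $A^2(\mathbb C^n,e^{-\varphi})$ (hence $0\in\sigma_e(\Box_\varphi^{(0,0)})$ and $P_-$ has no compact resolvent) and of Theorem \ref{decou} for the $P_+$ criterion, relying implicitly on the unitary equivalences \eqref{pauli10} and \eqref{eq:complex_laplacian_smooth_n2} established in the proof of Theorem \ref{pauli3}, exactly as you spell out. Your treatment is in fact slightly more careful than the paper's, since you note that Theorem \ref{decou} is stated only for $n\ge 2$ and handle $n=1$ separately.
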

\begin{proof}
By Theorem \ref{s: infdim} we obtain that $A^2( \mathbb{C}^n, e^{-\varphi})$ is infinite dimensional. So, $P_-$ fails to be with compact resolvent. The assertion about $P_+$ follows from Theorem \ref{decou}.
\end{proof}

\vskip 0.5 cm
{\bf Example:} For $\varphi (z_1, \dots, z_n)= |z_1|^2 + \dots + |z_n|^2$ both
Pauli operators $P_-$ and $P_+$ fail to be with compact resolvent.

\vskip 0.3 cm
Finally, we get the following result  for the Dirac operators \eqref{eq: schr18}.

\begin{theorem}\label{dir1}
Let $n=1$ and let $\varphi $ be a subharmonic $\mathcal C^2$-function such that $\Delta \varphi $ defines a nontrivial doubling measure. Then the Dirac operator
$$\mathcal{D}=\left (-i \frac{\partial}{\partial x}+ \frac{1}{2} \frac{\partial \varphi}{\partial y}\right )\, \sigma_1 + \left (-i \frac{\partial}{\partial y}-\frac{1}{2} \frac{\partial \varphi}{\partial x}\right )\, \sigma_2 ,$$
where 
$$
\sigma_1=\left(
\begin{array}{cc}
 0&1\\1&0
\end{array}\right)\;,\;\sigma_2=\left(
\begin{array}{cc}
 0&-i\\i&0
\end{array}\right)\;,
$$
fails to be with compact resolvent.
\end{theorem}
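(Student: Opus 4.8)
The plan is to use the identity, already recorded in the introduction, that the square of the Dirac operator $\mathcal D$ is the block-diagonal operator $\mathrm{diag}(P_-,P_+)$ on $L^2(\mathbb R^2)\oplus L^2(\mathbb R^2)$, together with the $P_-$-part of Theorem \ref{decou1} applied with $n=1$ and $\varphi_1=\varphi$.

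First I would pin down the functional-analytic framework. The first-order operators $\mathcal A_1=-i\partial_x+\frac{1}{2}\frac{\partial\varphi}{\partial y}$ and $\mathcal A_2=-i\partial_y-\frac{1}{2}\frac{\partial\varphi}{\partial x}$ have $\mathcal C^1$ coefficients, so $\mathcal D$ is essentially self-adjoint on $\mathcal C_0^\infty(\mathbb R^2,\mathbb C^2)$; likewise $P_\pm$ are essentially self-adjoint on $\mathcal C_0^\infty(\mathbb R^2)$, which is implicit in the proof of Theorem \ref{pauli3}, where $\frac{1}{4} P_-=U_{-\varphi}\,\Box_\varphi^{(0,0)}\,U_\varphi$ and $\frac{1}{4} P_+=U_{-\varphi}\,\Box_\varphi^{(0,1)}\,U_\varphi$ are unitarily equivalent to the self-adjoint weighted complex Laplacians (recall that here $n=1$, so the $(0,1)$-forms are the top forms). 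On the common core $\mathcal C_0^\infty$ the computation of $\mathcal D^2$ from the introduction applies verbatim, and taking closures identifies $\mathcal D^2$ with $\mathrm{diag}(P_-,P_+)$ as self-adjoint operators.

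Next I would extract the spectral consequence. If $\mathcal D$ had a compact resolvent, then so would $\mathcal D^2$, since $(\mathcal D^2+1)^{-1}=(\mathcal D+i)^{-1}(\mathcal D-i)^{-1}$ would be a product of compact operators; hence $\sigma_e(\mathcal D^2)=\emptyset$. But $\mathcal D^2$ is block diagonal, so $\sigma_e(\mathcal D^2)=\sigma_e(P_-)\cup\sigma_e(P_+)\supseteq\sigma_e(P_-)$, and by Theorem \ref{decou1} the operator $P_-=-\Delta_A-V$ fails to have a compact resolvent: the doubling hypothesis on $\Delta\varphi$ makes $A^2(\mathbb C,e^{-\varphi})$ infinite dimensional (Theorem \ref{s: infdim}), so $0\in\sigma_e(\Box_\varphi^{(0,0)})$ and therefore $0\in\sigma_e(P_-)$. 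Thus $0\in\sigma_e(\mathcal D^2)$, contradicting $\sigma_e(\mathcal D^2)=\emptyset$; hence $\mathcal D$ cannot have a compact resolvent.

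I expect the one delicate point to be the identification $\mathcal D^2=\mathrm{diag}(P_-,P_+)$ as an equality of self-adjoint operators, not merely of differential expressions; the essential self-adjointness on $\mathcal C_0^\infty$ of all operators involved is exactly what legitimizes passing from the formal computation to the closures. If one prefers to bypass this discussion, one can instead exhibit an infinite-dimensional kernel of $\mathcal D$ directly: a short computation gives $(\mathcal A_1+i\mathcal A_2)f=-2i\,e^{-\varphi/2}\,\partial_{\overline z}(e^{\varphi/2}f)$, so for every $h\in A^2(\mathbb C,e^{-\varphi})$ the pair $(e^{-\varphi/2}h,0)$ lies in $\ker\mathcal D$; hence $\ker\mathcal D\supseteq e^{-\varphi/2}A^2(\mathbb C,e^{-\varphi})\oplus\{0\}$, which is infinite dimensional by Theorem \ref{s: infdim}. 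An eigenvalue of infinite multiplicity lies in the essential spectrum, so once more $\mathcal D$ fails to be with compact resolvent.
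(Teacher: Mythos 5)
Your main argument is essentially the paper's own proof: square $\mathcal D$ to get $\mathrm{diag}(P_-,P_+)$, note that compactness of the resolvent of $\mathcal D$ forces compactness of the resolvent of $\mathcal D^2$ (you supply the factorization $(\mathcal D^2+1)^{-1}=(\mathcal D+i)^{-1}(\mathcal D-i)^{-1}$ where the paper just cites spectral analysis), and contradict the $P_-$ statement of Theorem \ref{decou1}. Your closing alternative is a genuinely different and arguably cleaner route that the paper does not take: the identity $(\mathcal A_1+i\mathcal A_2)f=-2i\,e^{-\varphi/2}\partial_{\overline z}(e^{\varphi/2}f)$ is correct, so $e^{-\varphi/2}A^2(\mathbb C,e^{-\varphi})\oplus\{0\}\subseteq\ker\mathcal D$ exhibits $0$ as an eigenvalue of infinite multiplicity of $\mathcal D$ itself, bypassing both the identification of $\mathcal D^2$ with $\mathrm{diag}(P_-,P_+)$ as self-adjoint operators and the appeal to Theorem \ref{decou1}; it only needs Theorem \ref{s: infdim}. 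Both arguments are correct.
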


\begin{proof}By spectral analysis (see \cite{Has10}) it follows that $\mathcal D^2$ has compact resolvent, if and only if $\mathcal D$ has compact resolvent.
Suppose that $\mathcal D$ has compact resolvent. Since
\begin{eqnarray*}\label{eq: schr19}
\mathcal D^2 
&=&\left(
\begin{array}{cc}
 P_-&0\\0&P_+
\end{array}\right),
\end{eqnarray*}
this would imply that both Pauli operators $P_-$ and $P_+$ have compact resolvent, contradicting Theorem \ref{decou1}.

\end{proof}

\vskip 1 cm

\bibliographystyle{amsplain}
\bibliography{mybibliography}

\end{document}